\documentclass{article}
\usepackage{amsmath}
\usepackage{amsfonts}
\usepackage{titlesec}
\usepackage{amsthm}

\newtheorem{theorem}{Theorem}[section]

\author{George Robinson}
\title{Reduced 2-coloured Khovanov Homology detects the Trefoil}
\date{}

\titleformat{\section}
{\normalfont\bfseries}{\quad\thesection .}{1em}{}

\begin{document}
	
\maketitle
\section{Introduction}
The question of which knots are detected by the Jones polynomial and its coloured variants is still very much open, but recently several weaker questions have been answered regarding a categorification of the Jones polynomial, first introduced by Khovanov in \cite{categorification}. In \cite{khovanov}, Khovanov also introduced categorifications for two versions of the $n$-coloured Jones polynomial, one returning $[n+1]$ for the unknot, the other returning $1$.

The first major detection result concerning these constructions came in \cite{spectralsequence}, where Grigsy and Wehrli constructed a spectral sequence from the n-coloured reduced categorification to knot Floer homology.

\begin{theorem}
	Let $K\subset S^3$ be an oriented knot, $\bar{K}\subset S^3$, and $K^r$ its orientation reverse. There is a spectral sequence whose $E^2$ term is $\widetilde{Kh}_2(\bar{K})$ and whose $E^{\infty}$ term is $\widehat{HFK}(S^3,K\#K^r)$.
\end{theorem}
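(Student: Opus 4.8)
The plan is to pass from the coloured Khovanov side to the Floer side through the double branched cover, assembling three ingredients: a cabling description of the reduced $2$-coloured homology, the Ozsv\'ath--Szab\'o spectral sequence relating Khovanov homology to a branched double cover, and a geometric identification of that cover's knot Floer homology with $\widehat{HFK}(S^3, K\#K^r)$. I work over $\mathbb{F}_2$, where the Ozsv\'ath--Szab\'o machinery is available.

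First I would present $\widetilde{Kh}_2(\bar K)$ as the Khovanov homology of a cable. The reduced $2$-coloured invariant is obtained by cabling $\bar K$ by two parallel strands and inserting the categorified second Jones--Wenzl projector; working in the thickened annulus (sutured annular Khovanov homology) one forms the cube-of-resolutions complex $C$ of this $2$-cable and extracts the direct summand cut out by the projector, so that $H_*(C)\cong \widetilde{Kh}_2(\bar K)$. Because the standard Ozsv\'ath--Szab\'o spectral sequence begins from the reduced Khovanov homology of the mirror, it is the mirror $\bar K$ that appears naturally here, and this is the source of the mirror in the statement.

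Next I would invoke the Ozsv\'ath--Szab\'o spectral sequence, in the sutured/branched form developed by Roberts. For a link $L$ in a thickened annulus there is a filtered Heegaard Floer complex for the double branched cover $\Sigma_2(L)$, filtered by the vertices of the Khovanov cube, whose associated graded is the Khovanov-type complex and whose total homology is a knot Floer homology of the lift of the annular axis. Applied to the $2$-cable $L$ of $\bar K$, this yields a spectral sequence with $E^2 \cong \widetilde{Kh}_2(\bar K)$ (the summand $C$ from the previous step) converging to the knot Floer homology of the lifted knot in $\Sigma_2(L)$.

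The final and most delicate step, which I expect to be the main obstacle, is to identify this target with $\widehat{HFK}(S^3, K\#K^r)$. The mechanism is that branching doubly along two parallel strands doubles the local model --- the double cover of a disc over two points is an annulus, by Riemann--Hurwitz --- so that $\Sigma_2$ of the $2$-cable glues two copies of the complement of $K$ along their boundary. A careful handle decomposition identifies this doubled manifold, and the lifted knot inside it, with $S^3$ together with $K\#K^r$; the orientation-reversing behaviour of the covering involution on the annulus fibre is what turns the second copy of $K$ into its reverse $K^r$ rather than into $K$. The remaining work is bookkeeping: matching bigradings across the two sides and confirming that the projector-cut summand on the $E^2$ page corresponds to the full group $\widehat{HFK}(S^3, K\#K^r)$ on $E^\infty$. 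The subtle points throughout are the orientation tracking in the branched cover and the verification that the sutured structure produced upstairs is exactly the one computing the knot Floer homology of a knot in $S^3$.
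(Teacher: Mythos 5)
First, a framing remark: the paper under review does not prove this statement at all; it is quoted from Grigsby--Wehrli \cite{spectralsequence} and used as a black box. So the comparison is with their proof. Your overall architecture --- realise the coloured invariant inside an annular (sutured) Khovanov complex of the $2$-cable, run a branched-double-cover spectral sequence of Ozsv\'ath--Szab\'o/Roberts type, then identify the target geometrically --- is indeed the architecture of \cite{spectralsequence}, and your middle step (Roberts' spectral sequence for links in a thickened annulus, recast in sutured Floer homology) is correct. But both of your end identifications are wrong as stated, and they are precisely where the content of the theorem lies.

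On the $E^2$ page: the invariant $\widetilde{Kh}_2$ in the statement is Khovanov's categorification from \cite{khovanov}, which is \emph{not} defined by inserting a categorified Jones--Wenzl projector and taking a direct summand. It is an iterated mapping cone of cobordism-induced maps between (based) cables, categorifying the expansion of the coloured Jones polynomial as an alternating sum of cable polynomials. Categorified projectors (Cooper--Krushkal, Rozansky) are semi-infinite complexes, and the coloured homology they produce is infinite dimensional already for the unknot, whereas the invariant in this theorem has rank $1$ there --- this finiteness is exactly what the detection argument in the paper needs. So with your Step 1 the $E^2$ page is not $\widetilde{Kh}_2(\bar K)$, and no argument is offered connecting the projector summand to Khovanov's cone construction; supplying that connection (relating the cone complex to the winding-number grading of the sutured annular complex of the cable, compatibly with the filtration) is one of the main theorems of \cite{spectralsequence}, not bookkeeping.

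On the $E^\infty$ page: the claim that a handle decomposition identifies $\Sigma_2(S^3, D_2(K))$ with $S^3$ containing $K\# K^r$ is false. Cutting $S^3$ along the cabling annulus and regluing shows that $\Sigma_2(S^3,D_2(K))$ is two copies of the exterior of $K$ glued along their boundary tori (meridian to meridian, $0$-framed longitude to $0$-framed longitude), i.e.\ $0$-surgery on $K\# K^r$; it has $b_1=1$, and for $K$ the unknot it is $S^1\times S^2$, never $S^3$. Moreover the axis $B$ satisfies $\operatorname{lk}(B,D_2(K))=2$, so its preimage is a $2$-component link, not ``the lifted knot.'' The correct identification, and the place where the sutured technology is unavoidable, is at the level of sutured manifolds: the complement of the lifted axis link, with its lifted sutures, is the sutured complement of $(K\# K^r)\cup\mu$ for $\mu$ a meridian, and one then needs Juh\'asz-type arguments together with the basepoint (reduced) structure to identify its sutured Floer homology with $\widehat{HFK}(S^3,K\# K^r)$. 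The reduction is essential: for the unknot the lifted axis is $S^1\times\{\mathrm{two\ points}\}\subset S^1\times S^2$, and without the reduced structure its Floer homology is too big, while the $E^\infty$ term here must have rank $1$. (Incidentally, $K^r$ appears because the oriented boundary of the cabling annulus consists of $K$ and its reverse, not because the covering involution is orientation-reversing --- deck transformations here preserve orientation.) So what you call ``remaining bookkeeping'' is in fact the second main theorem of \cite{spectralsequence}.
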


This leads to the easy corollary that the reduced 2-coloured Khovanov homology detects the unknot (and in fact it generalises to all $n>1$). In the unreduced case for $n=2$, Hedden proved unknot detection in \cite{cable}, using a spectral sequence from the Khovanov homology to the Floer homology of the branched double cover of a knot, first noted by Ozsv\'ath and Szab\'o in \cite{branched}.

It was then shown by Kronheimer and Mrowka in \cite{unknotdetect} that Khovanov homology detects the unknot, and by Baldwin and Sivek in \cite{trefoildetect} that it detects the trefoils.
 
The main result of this note is Theorem \ref{detection}, where we prove that the reduced 2-coloured Khovanov homology also detects the trefoil.

\section{Result}
\begin{theorem}
\label{detection}
	Let $K$ be a knot in $S^3$. Then $K$ is the trefoil if and only if $\text{\normalfont rk\ }\widetilde{\text{Kh}}_2(K)=9$, where $\widetilde{\text{Kh}}_2(K)$ denotes the reduced 2-coloured Khovanov homology of $K$.
\end{theorem}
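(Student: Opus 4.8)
The plan is to prove both implications, with the forward direction being essentially a computation and the reverse direction being the substantive part. For the forward direction I would assume $K$ is a trefoil and compute $\text{\normalfont rk\ }\widetilde{\text{Kh}}_2(K)$ directly, showing it equals $9$; since the total rank of any flavour of Khovanov homology is preserved under mirroring, it is enough to handle one of the two trefoils. Concretely I would build the reduced $2$-coloured complex from the categorified Jones–Wenzl projector applied to the $2$-cable of the trefoil (or from an explicit small model for the reduced $2$-coloured invariant) and read off the rank. A useful consistency check, and an independent lower bound, comes from the spectral sequence of the preceding theorem: applied to the trefoil $T$ it has $E^2$-page $\widetilde{\text{Kh}}_2(\bar T)$ and converges to $\widehat{HFK}(S^3,T\#T^r)$, and since $\widehat{HFK}(T)$ has rank $3$ the Künneth formula gives $\text{\normalfont rk\ }\widehat{HFK}(T\#T^r)=9$, forcing $\text{\normalfont rk\ }\widetilde{\text{Kh}}_2(T)\geq 9$ and hence degeneration once the direct computation supplies the matching upper bound.

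For the reverse direction I would start from the hypothesis $\text{\normalfont rk\ }\widetilde{\text{Kh}}_2(K)=9$ and pass to the mirror, so that $\text{\normalfont rk\ }\widetilde{\text{Kh}}_2(\bar K)=9$ as well. Feeding this into the spectral sequence of the preceding theorem, whose $E^2$-page is $\widetilde{\text{Kh}}_2(\bar K)$ and whose $E^\infty$-page is $\widehat{HFK}(S^3,K\#K^r)$, and using that each page is the homology of the previous one (so the rank can only drop, and only by even amounts), I obtain the key inequality $\text{\normalfont rk\ }\widehat{HFK}(K\#K^r)\leq 9$. All of this is taken over a field $\mathbb{F}$ so that ranks behave multiplicatively.

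Next I would unwind the connected sum. By the Künneth formula for knot Floer homology and the invariance of $\widehat{HFK}$ under orientation reversal, $\text{\normalfont rk\ }\widehat{HFK}(K\#K^r)=\text{\normalfont rk\ }\widehat{HFK}(K)\cdot\text{\normalfont rk\ }\widehat{HFK}(K^r)=\big(\text{\normalfont rk\ }\widehat{HFK}(K)\big)^2$, so the inequality becomes $\big(\text{\normalfont rk\ }\widehat{HFK}(K)\big)^2\leq 9$, i.e.\ $\text{\normalfont rk\ }\widehat{HFK}(K)\leq 3$. Because the graded Euler characteristic of $\widehat{HFK}(K)$ is $\pm\Delta_K(t)$ and $\Delta_K(1)=\pm1$, the total rank is odd, so $\text{\normalfont rk\ }\widehat{HFK}(K)\in\{1,3\}$. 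Rank $1$ forces $K$ to be the unknot, whose reduced $2$-coloured homology has rank $1\neq 9$, so that case is excluded and $\text{\normalfont rk\ }\widehat{HFK}(K)=3$. It then remains to argue that rank $3$ forces $K$ to be a trefoil: genus detection pins the support to Alexander gradings $\{g,0,-g\}$ with rank $1$ in each, fiberedness detection applies because the top grading has rank $1$, and the classification of genus-one fibered knots identifies the candidates as the two trefoils and the figure eight, the last of which has rank $5$ and is thereby ruled out.

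The main obstacle, I expect, lies at the two ends of this outline. On the forward side, explicitly computing the reduced $2$-coloured Khovanov homology of the trefoil is delicate, since the coloured theory involves a categorified projector and the complex must be reduced carefully enough to see that the rank is exactly $9$ rather than merely at least $9$. On the reverse side, the genuinely non-formal step is promoting $\text{\normalfont rk\ }\widehat{HFK}(K)=3$ to ``$K$ is a trefoil,'' which in particular requires ruling out higher-genus fibered knots whose knot Floer homology is concentrated in gradings $g,0,-g$; this is where I would either invoke the constraints that the L-space–knot Alexander polynomial places on such a configuration or cite an existing rank-$3$ detection result for $\widehat{HFK}$.
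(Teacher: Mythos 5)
Your proposal is correct and follows essentially the same route as the paper: the forward direction by direct computation of $\widetilde{\text{Kh}}_2$ of the trefoil (the paper does this by computer), and the reverse direction via the Grigsby--Wehrli spectral sequence, the K\"unneth formula giving $\bigl(\text{rk\ }\widehat{HFK}(K)\bigr)^2\le\text{rk\ }\widetilde{\text{Kh}}_2(K)$, parity of $\text{rk\ }\widehat{HFK}(K)$, genus detection in the rank-$1$ case, and the cited rank-$3$ detection result (Hedden--Watson) in the rank-$3$ case. If anything, you are slightly more careful than the paper, which leaves implicit both the exclusion of the unknot (rank $1\neq 9$) and the mirror-invariance needed to match the spectral sequence's $E^2$ page $\widetilde{\text{Kh}}_2(\bar K)$ with $\widetilde{\text{Kh}}_2(K)$.
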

\begin{proof}
	Let $K$ denote the (right-handed) trefoil in $S^3$. A computer calculation (the code for which is available on GitHub\footnote{https://github.com/robinsongeorge/Reduced-Khovanov-Homology}) of the rank of $\widetilde{Kh}_2(K)$ gives 9.
	
	For the converse implication, suppose that $K\subset S^3$ is a knot such that $\text{\normalfont rk\ }\widetilde{\text{Kh}}_2(K)\le24$. In particular, from the spectral sequence in \cite{spectralsequence}, it follows that
	\begin{align*}
	\left(\text{rk\ }\widehat{HFK}(K)\right)^2&=\text{rk\ }\widehat{HFK}(S^3,K\#K^r) \\
	&\le\text{rk\ }\widetilde{Kh}_2(K) \\
	&\le24.
	\end{align*}
	Combining this with the fact that $\text{rk\ }\widehat{HFK}(K)$ is odd, we see that $\text{rk\ }\widehat{HFK}(K)=1 \text{\ or\ } 3$. As noted in \cite{genus}, when $\text{rk\ }\widehat{HFK}(K)=1$ the fact that $\widehat{HFK}(K)$ is symmetric in the Alexander grading implies it must be supported in grading 0, and then since $\widehat{HFK}(K)$ detects genus, $K$ must be the unknot. In the case $\text{rk\ }\widehat{HFK}(K)=3$, $K$ must be a trefoil, as shown in \cite{hedden} by Hedden and Watson.
\end{proof}
The fact that $\text{rk\ }\widetilde{\text{Kh}}_2(K)=9$ for the trefoil, and $\text{rk\ }\widetilde{\text{Kh}}_2(K)=25$ for the figure-eight knot suggests that perhaps the spectral sequence always collapses by the $E^2$ page for alternating knots, however this is not true for links since the 2-4 torus link has $\text{rk\ }\widetilde{\text{Kh}}_2(L)=18$.


\begin{thebibliography}{10}
	\bibitem{trefoildetect}
	John A. Baldwin and Steven Sivek.
	\textit{Khovanov homology detects the trefoils}
	arxiv.org/abs/1801.07634, 2018.
	\bibitem{spectralsequence}
	J. Elisenda Grigsby and Stephan M. Wehrli.
	\textit{On the colored Jones polynomial, sutured Floer homology, and knot Floer homology}.
	Advanced in Mathematics, 223:2114-2165, 2009.
	\bibitem{cable}
	Matthew Hedden.
	\textit{Khovanov homology of the 2-cable detects the unknot}
	math.GT/0805.4418, 2008.
	\bibitem{hedden}
	Matthew Hedden and Liam Watson.
	\textit{On the geography and botany of knot Floer homology}
	arXiv:1404.6913, 2014.
	\bibitem{categorification}
	Mikhail Khovanov.
	\textit{A categorification of the Jones polynomial}
	Duke Math. J. , 101(3):359–426, 2000.
	\bibitem{khovanov} 
	Mikhail Khovanov.
	\textit{Categorifications of the colored Jones polynomial}. 
	J. Knot Theory Ramifications, 14(1):111-130, 2005
	\bibitem{unknotdetect}
	P. Kronheimer and T. Mrowka.
	\textit{Khovanov homology is an unknot detector}.
	Publ. Math. Inst.Hautes ́Etudes Sci., (113):97–208, 2011.
	\bibitem{genus}
	Peter Ozsv\'ath and Zoltan Szab\'o.
	\textit{Holomorphic disks and genus bounds}
	Geom. Topol. 8 (2004) 311-334, 2003.
	\bibitem{branched}
	Peter Ozsv\'ath and Zoltan Szab\'o.
	\textit{On the Heegaard Floer homology of branched double-covers}
	Adv. Math. 194 (2005), no. 1, 1--33.
\end{thebibliography}
\end{document}